\theoremstyle{plain}
\newtheorem{theorem}{Theorem}[section]
\newtheorem{claim}[theorem]{Claim}
\numberwithin{equation}{theorem}
\renewcommand{\labelenumi}{(\emph{\roman{enumi}})}
\theoremstyle{definition} 
\newtheorem{remark}[theorem]{Remark}
\newcommand{\fp}{\mathfrak{p}}
\newcommand{\injdim}[2][R]{\operatorname{inj.dim}_{#1}#2}
\newcommand{\projdim}[2][R]{\operatorname{proj.dim}_{#1}#2}
\newcommand{\ZZ}{\mathbb{Z}}
\newcommand{\is}{\cong}
\newcommand{\lra}{\longrightarrow}
\newcommand{\QQ}{\mathbb{Q}}
\newcommand{\QZ}{\QQ/\ZZ}
\renewcommand{\H}[2][]{\operatorname{H}_{#1}(#2)}
\newcommand{\splf}[1][R]{\operatorname{splf}#1}
\newcommand{\Hom}[3][R]{\operatorname{Hom}_{#1}(#2,#3)}
\newcommand{\Ext}[4][R]{\operatorname{Ext}_{#1}^{#2}(#3,#4)}
\newcommand{\tp}[3][R]{\nobreak{#2\otimes_{#1}#3}}
\def\soft#1{\leavevmode\setbox0=\hbox{h}\dimen7=\ht0\advance \dimen7
  by-1ex\relax\if t#1\relax\rlap{\raise.6\dimen7
    \hbox{\kern.3ex\char'47}}#1\relax\else\if T#1\relax
  \rlap{\raise.5\dimen7\hbox{\kern1.3ex\char'47}}#1\relax \else\if
  d#1\relax\rlap{\raise.5\dimen7\hbox{\kern.9ex
      \char'47}}#1\relax\else\if D#1\relax\rlap{\raise.5\dimen7
    \hbox{\kern1.4ex\char'47}}#1\relax\else\if l#1\relax
  \rlap{\raise.5\dimen7\hbox{\kern.4ex\char'47}}#1\relax \else\if
  L#1\relax\rlap{\raise.5\dimen7\hbox{\kern.7ex
      \char'47}}#1\relax\else\message{accent \string\soft \space #1
    not defined!}#1\relax\fi\fi\fi\fi\fi\fi}
\begin{document}

\title[Totally acyclic complexes and locally Gorenstein rings]{Totally
  acyclic complexes\\ and locally Gorenstein rings}

\author[L.\,W. Christensen]{Lars Winther Christensen}

\address{L.W.C. \ Texas Tech University, Lubbock, TX 79409, U.S.A.}


\email{lars.w.christensen@ttu.edu}

\urladdr{http://www.math.ttu.edu/~lchriste}

\author[K. Kato]{Kiriko Kato}

\address{K.K. \ Osaka Prefecture University, Sakai, Osaka 599-8531,
  Japan}

\email{kiriko@mi.s.osakafu-y.ac.jp}

\urladdr{http://www.mi.s.osakafu-u.ac.jp/~kiriko}

\thanks{The work was done while L.W.C.\ visited the Department of
  Mathematics and Information Sciences at Osaka Prefecture University.
  The visit took place under the Visitor Program of the Graduate
  School of Science, and the support and hospitality is acknowledged
  with gratitude.}

\date{10 February 2017}

\keywords{Gorenstein ring, totally acyclic complex}

\subjclass[2010]{13D02; 13H10}

\begin{abstract}
  A commutative noetherian ring with a dualizing complex is Gorenstein
  if and only if every acyclic complex of injective modules is totally
  acyclic. We extend this characterization, which is due to Iyengar
  and Krause, to arbitrary commutative noetherian rings, i.e.\ we
  remove the assumption about a dualizing complex. In this context
  Gorenstein, of course, means locally Gorenstein at every prime.
\end{abstract}

\maketitle

\thispagestyle{empty}

\section{The Theorem}
\label{sec:introduction}

\noindent
Let $R$ be a commutative ring. A complex $A$ of $R$-modules with
$\H{A}=0$ is called \emph{acyclic}. An acyclic complex $P$ of
projective $R$-modules is called \emph{totally acyclic} if
$\Hom{P}{Q}$ is acyclic for every projective $R$-module $Q$; likewise
an acyclic complex $I$ of injective $R$-modules is called totally
acyclic if $\Hom{E}{I}$ is acyclic for every injective $R$-module
$E$. Finally, an acyclic complex $F$ of flat $R$-modules is called
\emph{F-totally acyclic} if $\tp{F}{E}$ is acyclic for every injective
$R$-module~$E$.

The invariant $\splf = \sup\{\projdim{F} \mid F\ \text{\rm is a flat
  $R$-module}\}$ is finite if and only if every flat $R$-module has
finite projective dimension. Indeed, a direct sum of flat modules is
flat with $\projdim(\bigoplus_{\lambda\in \Lambda}F_\lambda) =
\sup_{\lambda\in \Lambda}\{\projdim F_\lambda\}$. A ring with $\splf
\le n$ is called \emph{$n$-perfect}; in particular, a $0$-perfect ring
is a perfect ring in the sense of Bass' Theorem P. With these
definitions in place we can state our:

\begin{theorem}
  \label{thm:main}
  Let $R$ be a commutative noetherian ring. Among the conditions
  \begin{enumerate}
  \item The local ring $R_\fp$ is Gorenstein for every prime ideal
    $\fp$ of $R$
  \item Every acyclic complex of injective $R$-modules is totally
    acyclic
  \item Every acyclic complex of flat $R$-modules is F-totally acyclic
  \item Every acyclic complex of projective $R$-modules is totally
    acyclic
  \end{enumerate}
  the following implications hold
  \begin{equation*}
    (i) \iff (ii) \iff (iii) \implies (iv)\:.
  \end{equation*}
  Moreover, if $\splf$ is finite, then all four conditions are
  equivalent.
\end{theorem}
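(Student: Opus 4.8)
The plan is to establish $(ii)\iff(iii)$ and $(i)\iff(iii)$, then the one-way implication $(iii)\Rightarrow(iv)$, and finally to close the circle with $(iv)\Rightarrow(iii)$ under the extra hypothesis that $\splf$ is finite; throughout, the Iyengar--Krause theorem quoted in the introduction serves as the engine in the presence of a dualizing complex, and the point is to reach it by localizing and completing. I would begin with $(ii)\iff(iii)$, obtained from the character-duality functor $(-)^+=\Hom[\ZZ]{-}{\QZ}$. Over a noetherian ring this functor is faithfully exact and sends flat modules to injective modules and injective modules to flat modules, and one has the tensor-evaluation isomorphism $(\tp{F}{E})^+\is\Hom{E}{F^+}$ for every injective $E$. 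Hence a complex $F$ of flats is acyclic iff $F^+$ is an acyclic complex of injectives, and $F$ is F-totally acyclic iff $F^+$ is totally acyclic; running the same analysis on $I^+$ for a complex $I$ of injectives supplies the reverse passage. The only care needed is that $(-)^+$ does not surject onto all flat (resp.\ injective) complexes, so each implication must be argued by dualizing an arbitrary $F$ (resp.\ $I$) rather than by a formal equivalence of categories.

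Second---and this is what I expect to be the main obstacle---I would prove $(i)\iff(iii)$. Both conditions should first be shown local. Condition $(i)$ is local by definition, and F-total acyclicity localizes well: every injective $R$-module is a direct sum of copies of modules $E_R(R/\fp)$, and since $E_R(R/\fp)$ is already an $R_\fp$-module one has $\tp[R]{F}{E_R(R/\fp)}\is\tp[R_\fp]{F_\fp}{E_R(R/\fp)}$, so that $(iii)$ for $R$ is governed by the localizations $F_\fp$ tested against the indecomposable injectives. This reduces matters to a local ring $(S,\fm,k)$, where I would pass to the completion $\hat S$: completion is faithfully flat, carries flat complexes to flat complexes, and satisfies $E_S(k)\is E_{\hat S}(k)$, so that acyclicity of $\tp[S]{G}{E_S(k)}$ is equivalent to acyclicity of $\tp[\hat S]{\hat G}{E_{\hat S}(k)}$. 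Since $\hat S$ is complete local it has a dualizing complex, so Iyengar--Krause applies there, and via $(ii)\iff(iii)$ over $\hat S$ identifies the total-acyclicity condition with the Gorenstein property; as $\hat S$ is Gorenstein iff $S$ is, this yields $(i)\iff(iii)$. The delicate steps are the faithfully-flat descent statements, together with the fact that $R$ may have infinite Krull dimension---precisely why one cannot argue globally---whereas the local rings that arise are Gorenstein and hence finite-dimensional, which is what makes the completed, dualizing-complex case tractable.

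Third, I would prove the unconditional implication $(iii)\Rightarrow(iv)$. A complex $P$ of projectives is in particular a complex of flats, so $(iii)$ already makes it F-totally acyclic, i.e.\ $\tp{P}{E}$ is acyclic for every injective $E$. To upgrade this to total acyclicity I must show $\Hom{P}{Q}$ is acyclic for every projective $Q$, which reduces to understanding $\Hom{P}{R}$ together with the interaction of $\Hom{P}{-}$ with coproducts. The dual $\Hom{P}{R}$ is again a complex of flat modules, since each $\operatorname{Hom}_R(P_i,R)$ is a direct summand of a product of copies of $R$ and such products are flat over the noetherian ring $R$. I would then relate its acyclicity to the already-known acyclicity of $\tp{P}{R^+}$ through the tensor-evaluation map; the snag is that this map is an isomorphism only in the finitely generated case, and bridging that gap is the technical heart of this implication and the place where properties of projectives beyond flatness are genuinely used.

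Fourth, assuming $\splf$ finite, I would close the loop with $(iv)\Rightarrow(iii)$. When $\splf\le n$ every flat module has projective dimension at most $n$, so given an acyclic complex $F$ of flats I would resolve each term by projectives in length $\le n$ and assemble, by a Cartan--Eilenberg-type construction, an acyclic complex $P$ of projectives together with a quasi-isomorphism to $F$. Condition $(iv)$ makes $P$ totally acyclic, and because the resolution has uniformly bounded length $n$, total and F-total acyclicity for such projective complexes can be compared; the resulting F-total acyclicity then transfers across the comparison back to $F$, the bound $n$ being exactly what guarantees the homological comparison of $\tp{P}{E}$ with $\tp{F}{E}$. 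Combined with $(i)\iff(ii)\iff(iii)$, this gives the equivalence of all four conditions when $\splf<\infty$.
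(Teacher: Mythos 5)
Your first step is where the proposal breaks down. Character duality does give (\emph{ii}) $\Rightarrow$ (\emph{iii})---that is exactly Claim~\ref{claim2} of the paper---but it cannot give the converse. Starting from an acyclic complex $I$ of injectives, the dual $I^{+}=\Hom[\ZZ]{I}{\QZ}$ is indeed an acyclic complex of flats, and (\emph{iii}) makes $\tp{I^+}{E}$ acyclic; but dualizing again yields acyclicity of $\Hom[\ZZ]{\tp{I^+}{E}}{\QZ} \is \Hom{E}{I^{++}}$, which concerns the double dual $I^{++}$, not $I$. The natural map $I \to I^{++}$ is a degreewise split monomorphism (each $I_n$ is injective), but its cokernel is again an acyclic complex of injectives, so this only reduces the problem to an identical problem; equivalently, the natural map $\tp{E}{I^+} \to \Hom{E}{I}^{+}$ fails to be an isomorphism because the injective module $E$ is typically not finitely presented. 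This is not a repairable technicality. In the paper, (\emph{iii}) $\Rightarrow$ (\emph{ii}) is obtained only by the long route (\emph{iii}) $\Rightarrow$ (\emph{i}) $\Rightarrow$ (\emph{ii}), and the step (\emph{i}) $\Rightarrow$ (\emph{ii}) (Claim~\ref{claim6}) rests on {\v{S}}{\soft{t}}ov{\'{\i}}{\v{c}}ek's theorem \cite[Corollary~5.9]{JSt} that the cycle modules of \emph{any} acyclic complex of injectives are cotorsion. Nothing in your proposal plays that role, so the proposal never produces any implication into (\emph{ii}).

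The gap propagates. Your (\emph{iii}) $\Rightarrow$ (\emph{i}) reaches Iyengar--Krause over $\widehat{R_\fp}$ by invoking ``(\emph{ii}) $\iff$ (\emph{iii}) over $\widehat{S}$,'' i.e.\ precisely the broken direction; the paper instead verifies the hypothesis of \cite[Corollary~5.5]{SInHKr06} on the projective side, by proving (Claim~\ref{claim3}) that an F-totally acyclic complex of projectives is totally acyclic---and that statement is exactly the ``technical heart'' you leave open in your third step. It cannot be bridged by relating $\Hom{P}{Q}$ to $\Hom{P}{R}$: since $P$ need not be degreewise finitely generated, $\Hom{P}{-}$ does not commute with coproducts, and the tensor-evaluation map is genuinely not an isomorphism there either. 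The paper fills this hole with three inputs absent from your proposal: Xu's characterization of flat cotorsion modules as direct summands of duals $\Hom[\ZZ]{E}{\QZ}$ of injectives \cite[Lemma~3.2.3]{xu}, cotorsion coresolutions of flat modules coming from the flat cover theorem \cite{BEE-01}, and Neeman's theorem \cite[Theorem~8.5]{ANm08} that $\Hom{P}{-}$ kills pure acyclic complexes of flats when $P$ is a complex of projectives. (Your fourth step, by contrast, is essentially workable: a bounded-height Cartan--Eilenberg comparison combined with the argument of Claim~\ref{claim4} is a reasonable alternative to the paper's passage through Neeman's triangle in Claim~\ref{claim1}.) As it stands, then, the proposal establishes only (\emph{ii}) $\Rightarrow$ (\emph{iii}) and (\emph{i}) $\Rightarrow$ (\emph{iii}); the implications into (\emph{i}), into (\emph{ii}), and into (\emph{iv}) all have genuine gaps.
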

\noindent
The proof is given in the next section. Here we continue with a
discussion of the precursors of the theorem and the condition $\splf <
\infty$; in the rest of this section $R$ is assumed to be noetherian.

If the ring $R$ has finite Krull dimension---if it is local, in
particular---then it is Gorenstein if and only if $\injdim{R}$ is
finite. It is standard to call a ring that satisfies part $(i)$ in the
theorem Gorenstein, but notice that such a ring need not have finite
injective dimension as a module over itself. Nagata's regular ring of
infinite Krull dimension serves as an example.

Equivalence of the conditions $(i)$, $(ii)$, and $(iv)$ in the theorem
was proved by Iyengar and Krause \cite[Corollary 5.5]{SInHKr06} under
the assumption that $R$ has a dualizing complex. That assumption
implies that $R$ has finite Krull dimension; see Hartshorne
\cite[Corollary V.7.2]{rad}, and as we recall next that implies $\splf
< \infty$.

For a ring of finite Krull dimension $d$, one has $\splf \le d$ by
work of Gruson and Raynaud \cite[Theorem~II.(3.2.6)]{LGrMRn71} and
Jensen \cite[Proposition~6]{CUJ70}. In this context the equivalence of
conditions $(i)$, $(iii)$, and $(iv)$ in the theorem was proved by
Murfet and Salarian \cite[Theorem 4.27 \& Corollary 4.28]{DMfSSl11},
and the equivalence of all four conditions was recently proved by
Estrada, Fu, and Iacob \cite{EFI}.

Thus, the novelty of Theorem~\ref{thm:main} is twofold; it
establishes:
\begin{list}{$\bullet$}%
  {\setlength{\leftmargin}{2\parindent}%
    \setlength{\labelwidth}{\parindent}%
    \setlength{\rightmargin}{0pt}%
    \setlength{\partopsep}{0pt}%
    \setlength{\topsep}{0pt}%
    \setlength{\parsep}{0pt}%
    \setlength{\itemsep}{0pt}}%
\item the equivalence of conditions $(i)$--$(iii)$ for rings of
  infinite Krull dimension.

\item the equivalence of $(i)$--$(iv)$ for certain rings of infinite
  Krull dimension. Indeed, by a result of Gruson and
  Jensen~\cite[Theorem~7.10]{LGrCUJ81} one has $\splf \le n+1$ if $R$
  has cardinality at most $\aleph_n$ for some natural number $n$.
\end{list}

\section{The proof}
\renewcommand{\labelenumi}{(\arabic{enumi})}

\noindent
Our argument for $(ii) \Rightarrow (i)$ is an application of a result
due to \v{S}{\soft{t}}ov{\'{\i}}{\v{c}}ek \cite{JSt}. The rest of the
proof is based on, mostly, standard arguments that can be found in
some form in the literature. We repeat them for the reader's
convenience and to clarify under which conditions they apply.

\subsection*{Outline}

For any commutative ring $R$, one can consider the following
conditions:
\begin{enumerate}
\item Every acyclic complex of projective $R$-modules is F-totally
  acyclic.
\item Every acyclic complex of injective $R$-modules is totally
  acyclic.
\item Every acyclic complex of flat $R$-modules is F-totally acyclic.
\item Every acyclic complex of projective $R$-modules is totally
  acyclic.
\end{enumerate}
For a noetherian ring $R$ one can also consider:
\begin{enumerate}
\item[$(3^{\rm bis})$] The local ring $R_\fp$ is Gorenstein for every
  prime ideal $\fp$ of $R$.
\end{enumerate}

\hspace{-23.5pt} \renewcommand{\arraystretch}{1.25}
\begin{tabular}{rll}
  \multicolumn{3}{l}{The proof of Theorem \ref{thm:main} goes as follows. 
    These implications always hold:}\\
  $\qquad\qquad(1) \iff (3) \impliedby (2)$ 
  & see Claims \ref{claim1} and \ref{claim2}.\\
  \multicolumn{3}{l}{Under the assumption that $R$ is coherent, one has}\\
  $(1) \implies (4)$ 
  &see \cite[Lemma 4.20]{DMfSSl11} or Claim \ref{claim3},\\
  \multicolumn{3}{l}{and if $R$ is coherent with $\splf$ finite, 
    then one also has}\\
  $(1) \impliedby (4)$ 
  &see \cite[Lemma 4.20]{DMfSSl11} or Claim \ref{claim4}.\\
  \multicolumn{3}{l}{If $R$ is noetherian, then one has}\\
  $(3^{\rm bis}) \iff (3)$ 
  &see \cite[Theorem 4.27]{DMfSSl11} or Claim \ref{claim5},\\
  \multicolumn{3}{l}{and}\\
  $(3^{\rm bis}) \implies (2)$ 
  &see Claim \ref{claim6}.\\
  \multicolumn{3}{l}{This finishes the proof of Theorem~\ref{thm:main}.}
\end{tabular}

\begin{center}
  $\ast \ \ast \ \ast$
\end{center}

\noindent
Recall that an $R$-module $C$ is called \emph{cotorsion} if
$\Ext{1}{F}{C} = 0$ holds for every flat $R$-module $F$.

\begin{claim}
  \label{claim6}
  Let $R$ be noetherian. If the local ring $R_\fp$ is Gorenstein for
  every prime ideal $\fp$ in $R$, then every acyclic complex of
  injective $R$-modules is totally acyclic.
\end{claim}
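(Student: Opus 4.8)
The plan is to reduce the verification of total acyclicity to a single indecomposable injective, and then to push the whole computation down to the local ring $R_\fp$, which—being noetherian local—has finite Krull dimension even when $R$ does not. That is where the Gorenstein hypothesis acquires teeth.

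First I would reduce to the case $E = E(R/\fp)$. Every injective $R$-module is a direct sum $\bigoplus_\lambda E(R/\fp_\lambda)$, and $\Hom{\bigoplus_\lambda E(R/\fp_\lambda)}{I} \is \prod_\lambda \Hom{E(R/\fp_\lambda)}{I}$; since homology commutes with products, it suffices to prove that $\Hom{E(R/\fp)}{I}$ is acyclic for every prime $\fp$. Next I would localize. As an $R$-module $E(R/\fp)$ is $\fp$-power torsion and $\fp$-local, so every homomorphism out of it has image in the $\fp$-power-torsion submodule, a submodule unaffected by inverting elements outside $\fp$; chasing this through yields a natural isomorphism $\Hom{E(R/\fp)}{N} \is \Hom[R_\fp]{E(R/\fp)}{N_\fp}$. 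Applying it degreewise gives $\Hom{E(R/\fp)}{I} \is \Hom[R_\fp]{E(R/\fp)}{I_\fp}$, and $I_\fp$ is an acyclic complex of injective $R_\fp$-modules, since localization is exact and preserves injectivity over a noetherian ring.

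It then remains to show $\Hom[R_\fp]{E(R/\fp)}{I_\fp}$ is acyclic. The local ring $R_\fp$ has finite Krull dimension $h = \operatorname{ht}\fp$, so $\splf[R_\fp] \le h$ by the result of Gruson--Raynaud and Jensen quoted above. Because $R_\fp$ is Gorenstein, the injective module $E(R/\fp)$ has finite flat dimension over $R_\fp$ (exhibited, for instance, by the stable Koszul complex on a system of parameters, whose cohomology is concentrated in degree $h$ and equal to $E(R/\fp)$), and combining this with $\splf[R_\fp]<\infty$ upgrades it to $\projdim[R_\fp]{E(R/\fp)} < \infty$. Writing $Z_n$ for the cosyzygies of the acyclic complex $I_\fp$, the short exact sequences $0 \to Z_{n+1} \to (I_\fp)_{n+1} \to Z_n \to 0$ have injective middle terms, so dimension-shifting gives $\Ext[R_\fp]{1}{E(R/\fp)}{Z_n} \is \Ext[R_\fp]{1+k}{E(R/\fp)}{Z_{n+k}}$ for every $k \ge 0$; taking $k$ larger than $\projdim[R_\fp]{E(R/\fp)}$ forces these groups to vanish. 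As each homology module $\H[n]{\Hom[R_\fp]{E(R/\fp)}{I_\fp}}$ is, up to reindexing, of the form $\Ext[R_\fp]{1}{E(R/\fp)}{Z_m}$, the complex is acyclic, which completes the proof.

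The step I expect to be the main obstacle is the finiteness $\projdim[R_\fp]{E(R/\fp)}<\infty$: this is precisely what keeps the argument alive in infinite Krull dimension, and it rests on two genuinely separate inputs—that $R_\fp$ is Gorenstein, giving finite flat dimension of the injective hull, and that $R_\fp$ is noetherian \emph{local}, hence of finite Krull dimension, giving $\splf[R_\fp]<\infty$ to convert finite flat dimension into finite projective dimension. The global ring $R$ may well carry injective modules of infinite projective dimension, so it is essential that the torsion isomorphism of the second step move the entire computation into $R_\fp$ before the dimension-shifting is carried out.
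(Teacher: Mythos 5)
Your proof has a genuine gap at the localization step. The claimed natural isomorphism $\Hom{E(R/\fp)}{N} \is \Hom[R_\fp]{E(R/\fp)}{N_\fp}$ is false, and with it the identification $\Hom{E(R/\fp)}{I} \is \Hom[R_\fp]{E(R/\fp)}{I_\fp}$ on which everything after your first reduction rests. Take $R = \ZZ$ and $\fp = (0)$, so that $E(R/\fp) = \QQ = R_\fp$, and let $N = E(\ZZ/(p)) \is \QQ/\ZZ_{(p)}$; this is an injective $\ZZ$-module, so it genuinely occurs as a component of acyclic complexes of injectives. Since $N$ is torsion, one has $N_\fp = \tp[\ZZ]{N}{\QQ} = 0$ and hence $\Hom[R_\fp]{E(R/\fp)}{N_\fp} = 0$; yet $\Hom[\ZZ]{\QQ}{\QQ/\ZZ_{(p)}}$ contains the nonzero quotient map, so no isomorphism between the two sides can exist. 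The flaw in your justification is the assertion that the $\fp$-power-torsion submodule of $N$ is unaffected by inverting elements outside $\fp$: that is true when $\fp$ is maximal (then any $s \notin \fp$ is invertible modulo $\fp^n$), but false for non-maximal primes, because an element annihilated by a power of $\fp$ may also be annihilated by an element outside $\fp$ and then dies in $N_\fp$ (in the example above every element of $N$ is trivially $(0)$-power torsion, yet all of $N$ dies under localization). Since total acyclicity must be tested against $E(R/\fp)$ for \emph{all} primes $\fp$, not only maximal ones, restricting to maximal ideals cannot rescue the argument.

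This is precisely the obstacle the paper's proof is built to circumvent, and it is why the result is not elementary. The paper replaces localization by co-localization: adjunction gives the honest isomorphism $\Hom{E}{I} \is \Hom[R_\fp]{E}{\Hom{R_\fp}{I}}$, with $\Hom{R_\fp}{I}$ rather than $I_\fp$ inside. The price is that $\Hom{R_\fp}{-}$, unlike localization, is not exact, so acyclicity of $\Hom{R_\fp}{I}$ --- which you got for free for $I_\fp$ --- now needs a substantive input: by \v{S}tov\'{\i}\v{c}ek's theorem \cite[Corollary~5.9]{JSt} the cycle modules $Z_n$ of an acyclic complex of injectives are cotorsion, and since $R_\fp$ is flat this yields $\Ext{1}{R_\fp}{Z_{n+1}} = 0$, so applying $\Hom{R_\fp}{-}$ to the sequences $0 \to Z_{n+1} \to I_{n+1} \to Z_n \to 0$ preserves exactness. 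From that point on the paper argues as you do: your reduction to indecomposable injectives, the finiteness of $\projdim[R_\fp]{E(R/\fp)}$ (finite flat dimension over the Gorenstein local ring $R_\fp$ upgraded via $\splf[R_\fp] < \infty$), and the dimension shifting are all correct and agree with the paper's conclusion of the proof. But without a valid replacement for the false isomorphism, these pieces do not assemble into a proof.
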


\begin{proof}
  Let $I$ be an acyclic complex of injective $R$-modules. It is
  sufficient to show that $\Hom{E}{I}$ is acyclic for every
  indecomposable injective $R$-module $E = E(R/\fp)$. Fix such a
  module, it has an $R_\fp$-module structure, and the $R$-action
  factors through this structure, so standard adjunction yields
  \begin{equation*}
    \Hom{E}{I} \is \Hom{\tp[R_\fp]{R_\fp}{E}}{I} \is 
    \Hom[R_\fp]{E}{\Hom{R_\fp}{I}}\:.
  \end{equation*}
  By \cite[Corollary~5.9]{JSt} the cycle submodules of the complex $I$
  are cotorsion, so the complex $\Hom{R_\fp}{I}$ of injective
  $R_\fp$-modules is acyclic. As $R_\fp$ is Gorenstein, the injective
  module $E$ has finite projective dimension, and it follows that
  $\Hom[R_\fp]{E}{\Hom{R_\fp}{I}}$ and hence $\Hom{E}{I}$ acyclic.
\end{proof}

\begin{claim}
  \label{claim1}
  Every acyclic complex of projective $R$-modules is F-totally acyclic
  if and only if every acyclic complex of flat $R$-modules is
  F-totally acyclic.
\end{claim}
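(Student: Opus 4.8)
The backward implication is immediate: a projective module is flat, so any acyclic complex of projective modules is in particular an acyclic complex of flat modules, and the hypothesis that all of the latter are F-totally acyclic applies verbatim. All the work is in the forward implication, so the plan is to assume that every acyclic complex of projective $R$-modules is F-totally acyclic, take an acyclic complex $F$ of flat $R$-modules, and show that $F\otimes_R E$ is acyclic for every injective $R$-module $E$.

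First I would record the homological reformulation. Writing $Z_n=\ker(F_n\to F_{n-1})$ for the cycle modules, acyclicity of $F$ yields short exact sequences $0\to Z_{n+1}\to F_{n+1}\to Z_n\to 0$ in which $F_{n+1}$ is flat. A standard dimension shift along these sequences identifies $\operatorname{H}_n(F\otimes_R E)$ with $\operatorname{Tor}^R_1(Z_{n-1},E)$, so F-total acyclicity of $F$ is equivalent to the vanishing of $\operatorname{Tor}^R_1(Z_n,E)$ for all $n$ and every injective $E$. This isolates the obstruction in the cycle modules, which in general are not flat.

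Next I would compare $F$ with a complex of projectives. Choosing a surjective quasi-isomorphism $\pi\colon P\to F$ with $P$ a complex of projective modules, acyclicity of $F$ forces $P$ to be acyclic, and the hypothesis makes $P\otimes_R E$ acyclic. The kernel complex $K=\ker\pi$ has terms $K_n=\ker(P_n\to F_n)$, which are flat because $F_n$ is, and $K$ is acyclic because $\pi$ is a quasi-isomorphism. Flatness of each $F_n$ keeps the degreewise sequence $0\to K_n\to P_n\to F_n\to 0$ exact after $-\otimes_R E$, so one gets a short exact sequence of complexes $0\to K\otimes_R E\to P\otimes_R E\to F\otimes_R E\to 0$; since the middle term is acyclic, its long exact homology sequence gives $\operatorname{H}_n(F\otimes_R E)\cong \operatorname{H}_{n-1}(K\otimes_R E)$. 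It therefore suffices to prove that $K\otimes_R E$ is acyclic.

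The heart of the matter, and the step I expect to be the main obstacle, is arranging $K$ to be pure acyclic, that is, acyclic with flat cycle modules; this requires more than the naive kernel, whose cycles one cannot control directly. The key input is that a complex of flat modules differs from a complex of projectives only by a pure acyclic complex, so $\pi$ can be chosen with pure acyclic kernel, drawing on the structure theory of complexes of flat modules developed by Neeman and exploited by Murfet and Salarian \cite{DMfSSl11}. Once $K$ is pure acyclic, $K\otimes_R N$ is acyclic for every $R$-module $N$, in particular for the injective $E$, so $\operatorname{H}_n(F\otimes_R E)\cong\operatorname{H}_{n-1}(K\otimes_R E)=0$ for all $n$, and $F$ is F-totally acyclic. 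An equivalent packaging would present $F$ as a filtered colimit of F-totally acyclic complexes of projectives and invoke that both $-\otimes_R E$ and homology commute with filtered colimits; the difficulty then simply migrates to producing such a presentation, which rests on the same structure theory.
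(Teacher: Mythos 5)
Your proposal is correct and rests on essentially the same argument as the paper: the decisive input in both is Neeman's theorem that a complex of flat modules differs from a complex of projectives only by a pure acyclic complex, after which acyclicity of $F$ forces the projective complex $P$ to be acyclic, the hypothesis applies to $P$, and pure acyclicity of the correction term transfers F-total acyclicity back to $F$. The only difference is presentational---the paper invokes the result as a distinguished triangle $P \to F \to C \to$ in the homotopy category of complexes of flat modules and tensors the triangle, whereas you strictify it to a surjective chain map with pure acyclic kernel and run long exact sequences of the two short exact sequences of complexes; that conversion is routine.
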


\begin{proof}
  The ``if'' part is trivial as projective modules are flat. For the
  converse, let $F$ be an acyclic complex of flat modules. It follows
  from \cite[Propositions 8.1 \& 9.1]{ANm08} that, in the homotopy
  category of complexes of flat modules, there is a distinguished
  triangle $P \to F \to C \to $, where $P$ is a complex of projective
  modules and $C$ is pure acyclic; i.e.\ $\H{C} = 0 = \H{\tp{C}{M}}$
  for every module $M$.  It follows that $P$ is acyclic, so by
  assumption it is F-totally acyclic, and hence so is $F$.
\end{proof}

\begin{claim}
  \label{claim2}
  If every acyclic complex of injective $R$-modules is totally
  acyclic, then every acyclic complex of flat $R$-modules is F-totally
  acyclic.
\end{claim}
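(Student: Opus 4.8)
The plan is to pass to character duals and thereby convert the desired F-total acyclicity of a complex of flat modules into the total acyclicity of a complex of injectives, to which the hypothesis applies verbatim. Write $(-)^+ = \Hom[\ZZ]{-}{\QZ}$ for the character dual functor. Two standard facts drive the argument. First, a module $M$ is flat if and only if $M^+$ is injective; hence dualizing a complex of flat modules produces a complex of injective modules. Second, $\QZ$ is a faithfully injective $\ZZ$-module, so $(-)^+$ is exact and, moreover, detects acyclicity: a complex $C$ satisfies $\H{C} = 0$ if and only if $\H{C^+} = 0$. Both facts hold over an arbitrary ring, which matches the generality of the claim.

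Let $F$ be an acyclic complex of flat $R$-modules and let $E$ be an injective $R$-module; the goal is to show $\H{\tp{F}{E}} = 0$. First I would observe that $F^+$ is an acyclic complex of injective $R$-modules: each term $F_i^+$ is injective because $F_i$ is flat, and $F^+$ is acyclic because $(-)^+$ is exact and $F$ is acyclic. The hypothesis then applies to $F^+$, yielding that $\Hom{E}{F^+}$ is acyclic. The final step is the tensor--hom adjunction $\Hom{E}{F^+} \is \Hom[\ZZ]{\tp{E}{F}}{\QZ} = (\tp{E}{F})^+$, applied termwise and compatibly with the differentials. Combining these gives that $(\tp{E}{F})^+$ is acyclic, and since $(-)^+$ detects acyclicity, $\tp{F}{E} \is \tp{E}{F}$ is acyclic, as wanted.

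I do not expect a serious obstacle here; the whole argument is a matter of selecting the right duality, and no finiteness hypothesis on $R$ or on $\splf$ enters. The only point requiring mild care is that the adjunction isomorphism must be natural enough to identify $\Hom{E}{F^+}$ with $(\tp{E}{F})^+$ as complexes, not merely degreewise as modules; this is immediate from the naturality of the tensor--hom adjunction. Thus the content is concentrated in the two structural facts about the functor $(-)^+$, and the rest is bookkeeping.
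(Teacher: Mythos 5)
Your proposal is correct and follows exactly the paper's own argument: form the character complex $\Hom[\ZZ]{F}{\QZ}$, note it is an acyclic complex of injectives, apply the hypothesis, and use the adjunction $\Hom{E}{\Hom[\ZZ]{F}{\QZ}} \is \Hom[\ZZ]{\tp{F}{E}}{\QZ}$ together with faithful injectivity of $\QZ$ to conclude. The only difference is that you spell out the standard facts (flat dualizes to injective, $(-)^+$ detects acyclicity, naturality of adjunction) that the paper leaves implicit.
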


\begin{proof}
  Let $F$ be an acyclic complex of flat $R$-modules. The character
  complex $\Hom[\ZZ]{F}{\QZ}$ is an acyclic complex of injective
  $R$-modules, so the complex
  \begin{equation*}
    \Hom{E}{\Hom[\ZZ]{F}{\QZ}} \is \Hom[\ZZ]{\tp{F}{E}}{\QZ}
  \end{equation*}
  is by assumption acyclic for every injective $R$-module $E$. It
  follows that $\tp{F}{E}$ is acyclic for every injective $R$-module
  $E$.
\end{proof}

The classes of flat modules and cotorsion modules make up a complete
cotorsion pair; that is a theorem of Bican, El Bashir, and Enochs
\cite{BEE-01}. In particular, there is for every $R$-module $M$ an
exact sequence $0 \to M \to C \to F \to 0$ where $C$ is cotorsion and
$F$ is flat. If $M$ is flat, then so is $C$, so by iteration a flat
$R$-module admits a, possibly infinite, coresolution by flat cotorsion
modules.

\begin{claim}
  \label{claim3}
  Let $R$ be coherent and $P$ be an acyclic complex of projective
  $R$-modules. If $P$ is F-totally acyclic, then $\Hom{P}{G}$ is
  acyclic for every flat $R$-module $G$; in particular, $P$ is totally
  acyclic.
\end{claim}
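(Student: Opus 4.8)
The plan is to prove the stronger assertion in two stages: first for flat cotorsion modules, then for an arbitrary flat module by means of the flat cotorsion coresolutions recalled just above the claim. Throughout I write $M^+$ for the character module $\Hom[\ZZ]{M}{\QZ}$, and I use freely that $M^+$ is injective when $M$ is flat and flat when $M$ is injective.

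First I would settle the flat cotorsion case. For any flat module $G$ the module $G^+$ is injective, so F-total acyclicity of $P$ makes $\tp{P}{G^+}$ acyclic; applying the exact functor $\Hom[\ZZ]{-}{\QZ}$ together with Hom-tensor adjunction identifies $\Hom{P}{G^{++}} \is \Hom[\ZZ]{\tp{P}{G^+}}{\QZ}$, which is therefore acyclic. Here $G^{++}$ is flat, being the character module of the injective module $G^+$, and cotorsion, being a character module and hence pure-injective. Now let $C$ be any flat cotorsion module. The canonical pure embedding $C \to C^{++}$ has flat cokernel, so the sequence $0 \to C \to C^{++} \to C^{++}/C \to 0$ splits because $C$ is cotorsion; thus $C$ is a direct summand of $C^{++}$, and $\Hom{P}{C}$ is a direct summand of the acyclic complex $\Hom{P}{C^{++}}$. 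Hence $\Hom{P}{C}$ is acyclic for every flat cotorsion $C$.

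Next I would pass to an arbitrary flat module $G$ by choosing a coresolution $0 \to G \to C^0 \to C^1 \to \cdots$ by flat cotorsion modules. Applying $\Hom{P}{-}$ produces a double complex whose $i$-th row $\Hom{P}{C^i}$ is acyclic by the first stage, and whose columns $\Hom{P_j}{C^\bullet}$ are resolutions of $\Hom{P_j}{G}$, since each $P_j$ is projective and $\Hom{P_j}{-}$ is therefore exact. As the coresolution is bounded below, the double complex occupies a half-plane, and the acyclic assembly lemma lets me compare $\Hom{P}{G}$ with the product total complex and conclude that $\Hom{P}{G}$ is acyclic. Taking $G$ projective then gives that $P$ is totally acyclic.

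The main obstacle is this second stage. The coresolution of $G$ is in general infinite, so the reduction to the flat cotorsion case is not a finite d\'evissage: the short exact sequences $0 \to K^{i-1} \to C^i \to K^i \to 0$ only yield the dimension shift $\H[n]{\Hom{P}{K^{i-1}}} \is \H[n+1]{\Hom{P}{K^i}}$, which by itself never forces vanishing. One must instead verify that the assembly of the whole infinite coresolution converges, and it is precisely the bounded-below shape of the coresolution, together with coherence of $R$ (which keeps the class of flat modules closed under the products implicit in the totalization), that makes this legitimate. By contrast, the first stage is formal and uses only the two character-module facts recorded at the outset.
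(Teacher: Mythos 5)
Your first stage is correct, and it is in fact a self-contained proof of exactly what the paper imports from \cite[Lemma 3.2.3]{xu}: every flat cotorsion module is a direct summand of the character module of an injective module (coherence enters there, through flatness of character modules of injectives, just as in the paper). The fatal gap is in your second stage. The acyclic assembly lemma for the \emph{product} total complex does not cover your configuration. The orientation that is valid here is the one along the \emph{rows}: exactness of each $\Hom{P}{C^i}$ for a bounded-below coresolution $C$ forces $\Hom{P}{C}$ to be acyclic --- this is \cite[Lemma~2.5]{CFH-06}, which the paper invokes. But that statement concerns only the coresolution and says nothing about $\Hom{P}{G}$. To pass from $\Hom{P}{C}$ to $\Hom{P}{G}$ you need the augmentation $\Hom{P}{G} \to \Hom{P}{C}$ to be a quasi-isomorphism, equivalently you need the product total complex of the \emph{augmented} double complex, which is $\Hom{P}{G^+}$, to be acyclic; and that is the exact-\emph{columns} orientation (columns shaped like coresolutions, rows unbounded), for which no version of the assembly lemma holds.

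That this cannot be repaired by any formal totalization argument is shown by an example. Let $k$ be a field, $R = k[x]/(x^2)$, and $P$ the complex $\cdots \xrightarrow{x} R \xrightarrow{x} R \xrightarrow{x} \cdots$, an acyclic (indeed totally acyclic and F-totally acyclic) complex of projectives. The module $G = k$ admits the coresolution $0 \to k \to R \xrightarrow{x} R \xrightarrow{x} \cdots$, and every term $R$ is flat and cotorsion (over an artinian ring flat modules are projective, so $\Ext{1}{F}{R} = 0$ for flat $F$). So all the inputs of your second stage are present: each row $\Hom{P}{R}$ is acyclic, each augmented column $\Hom{R}{G^+} \is G^+$ is exact, and the double complex occupies a half-plane. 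Yet $\Hom{P}{k}$ is the complex $\cdots \xrightarrow{0} k \xrightarrow{0} k \xrightarrow{0} \cdots$, which is nowhere exact. Since $k$ is not flat this does not contradict the claim, but it shows your schema proves too much and hence proves nothing: your argument never uses flatness of $G$ itself, only the existence of the coresolution, and the example satisfies all of that. The paper closes precisely this gap with a non-formal ingredient: when $G$ is flat, the augmented coresolution $G^+$ is an acyclic complex of \emph{flat} modules with flat cokernels, i.e.\ a pure acyclic complex of flats, and Neeman's theorem \cite[Theorem~8.5]{ANm08} then gives that $\Hom{P}{G^+}$ is acyclic for any complex of projectives $P$; as $G^+$ is the cone of $G \to C$ and $\Hom{P}{C}$ is acyclic by \cite[Lemma~2.5]{CFH-06}, acyclicity of $\Hom{P}{G}$ follows. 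Finally, your parenthetical that coherence keeps flat modules closed under the products in the totalization is a red herring: no such products of flat modules occur, and coherence is needed only in your first stage.
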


\begin{proof}
  For every injective $R$-module $E$ the complex $\tp{P}{E}$ and hence
  \begin{equation*}
    \Hom[\ZZ]{\tp{P}{E}}{\QZ} \is \Hom{P}{\Hom[\ZZ]{E}{\QZ}}
  \end{equation*}
  is acyclic. By \cite[Lemma 3.2.3]{xu} an $R$-module is cotorsion and
  flat if and only if it is a direct summand of the character module
  $\Hom[\ZZ]{E}{\QZ}$ of some injective $R$-module $E$. Thus,
  $\Hom{P}{C}$ is acyclic for every flat cotorsion $R$-module $C$.

  Let $G$ be a flat $R$-module with cotorsion coresolution $G \to
  C$. The fact that $\Hom{P}{C^i}$ is acyclic for every $i$ implies by
  \cite[Lemma~2.5]{CFH-06} that $\Hom{P}{C}$ is acyclic. Consider the
  augmented coresolution
  \begin{equation*}
    G^+ \,=\; 0  \lra G \lra C^0 \lra C^1 \lra \cdots 
    \lra C^i \lra C^{i+1} \lra \cdots\:.
  \end{equation*}
  It is a pure acyclic complex of flat $R$-modules, so by
  \cite[Theorem~8.5]{ANm08} the complex $\Hom{P}{G^+}$ is
  acyclic. Since the complex $G^+$ is the mapping cone of the morphism
  $G \to C$, one now has $\H{\Hom{P}{G}} \is \H{\Hom{P}{C}} =0$.
\end{proof}

\begin{claim}
  \label{claim4}
  Let $R$ be coherent with $\splf$ finite. Every totally acyclic
  complex $P$ of projective $R$-modules is F-totally acyclic.
\end{claim}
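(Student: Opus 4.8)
The plan is to show directly that $\tp{P}{E}$ is acyclic for every injective $R$-module $E$, and I would attack this exactly as in Claim~\ref{claim3}, by passing to character complexes. Since $\QZ$ is a faithfully injective cogenerator of the abelian groups, $\tp{P}{E}$ is acyclic if and only if its character complex is, and the standard adjunction gives
\[
  \Hom[\ZZ]{\tp{P}{E}}{\QZ} \is \Hom{P}{\Hom[\ZZ]{E}{\QZ}}\:.
\]
Because $R$ is coherent and $E$ is injective, the character module $\Hom[\ZZ]{E}{\QZ}$ is flat. Thus the whole claim reduces to proving that $\Hom{P}{G}$ is acyclic for every flat $R$-module $G$.

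This is the point at which I would use the hypothesis $\splf < \infty$, and it is the only place finiteness is needed. Put $n = \splf$. Every flat module $G$ then has $\projdim G \le n$, so it admits a projective resolution $0 \to Q_n \to \cdots \to Q_0 \to G \to 0$ of finite length. I would cut this into the short exact sequences $0 \to \Omega^{k+1} \to Q_k \to \Omega^k \to 0$ determined by the syzygies, with $\Omega^0 = G$ and $\Omega^n = Q_n$ projective. Since each $P_i$ is projective, the functor $\Hom{P_i}{-}$ is exact; applying $\Hom{P}{-}$ degreewise therefore converts each of these sequences into a short exact sequence of complexes, and hence into a long exact sequence in homology.

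A descending induction then finishes the argument. The top syzygy $\Omega^n = Q_n$ is projective, so $\Hom{P}{\Omega^n}$ is acyclic because $P$ is totally acyclic, and each $\Hom{P}{Q_k}$ is acyclic for the same reason. Feeding these into the long exact sequences, acyclicity of $\Hom{P}{\Omega^{k+1}}$ together with that of $\Hom{P}{Q_k}$ forces acyclicity of $\Hom{P}{\Omega^k}$; after $n$ steps this reaches $\Hom{P}{\Omega^0} = \Hom{P}{G}$. Combined with the reduction above, it follows that $\tp{P}{E}$ is acyclic, i.e.\ that $P$ is F-totally acyclic.

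Each individual step is routine, so the hard part is really conceptual: recognizing that the only ingredient beyond the character-module formalism of Claim~\ref{claim3} is the finiteness of $\splf$. It is precisely this bound that guarantees a projective resolution of $G$ terminating in a projective syzygy, and so anchors the induction---whereas the reverse implication in Claim~\ref{claim3} needed no such hypothesis. The one technical point to verify carefully is that $\Hom[\ZZ]{E}{\QZ}$ is flat, which is where coherence of $R$ is used.
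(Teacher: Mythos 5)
Your proposal is correct and follows essentially the same route as the paper: reduce to the character complex via the adjunction $\Hom[\ZZ]{\tp{P}{E}}{\QZ} \is \Hom{P}{\Hom[\ZZ]{E}{\QZ}}$, use coherence to see that $\Hom[\ZZ]{E}{\QZ}$ is flat, and use $\splf < \infty$ to give it finite projective dimension, so that total acyclicity of $P$ applies. The only difference is that you spell out the final dimension-shifting induction over the syzygies of a finite projective resolution, a step the paper compresses into the single phrase ``it follows that $\Hom{P}{\Hom[\ZZ]{E}{\QZ}}$ is acyclic.''
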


\begin{proof}
  Let $P$ be a totally acyclic complex of projective $R$-modules and
  $E$ be an injective $R$-module. The complex $\tp{P}{E}$ is acyclic
  if the character complex
  \begin{equation*}
    \Hom[\ZZ]{\tp{P}{E}}{\QZ} \is \Hom{P}{\Hom[\ZZ]{E}{\QZ}}
  \end{equation*}
  is acyclic. The $R$-module $\Hom[\ZZ]{E}{\QZ}$ is flat, see
  \cite[Lemma 3.1.4]{xu}, so by assumption it has finite projective
  dimension. As $\Hom{P}{Q}$ is acyclic for every projective
  $R$-module $Q$, it follows that $\Hom{P}{\Hom[\ZZ]{E}{\QZ}}$ is
  acyclic.
\end{proof}

\begin{claim}
  \label{claim5}
  Let $R$ be noetherian. Every acyclic complex of flat $R$-modules is
  totally acyclic if and only if $R_\fp$ is Gorenstein for every prime
  ideal $\fp$ of $R$.
\end{claim}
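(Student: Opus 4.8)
The backward implication is already in hand: if $R_\fp$ is Gorenstein for every prime $\fp$, then Claim \ref{claim6} gives that every acyclic complex of injective $R$-modules is totally acyclic, and Claim \ref{claim2} then gives that every acyclic complex of flat $R$-modules is F-totally acyclic. So the plan is to concentrate on the forward implication: assuming condition (3)---every acyclic complex of flat $R$-modules is F-totally acyclic---I want to conclude that each $R_\fp$ is Gorenstein.

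First I would show that condition (3) localizes, reducing the problem to the local case. Let $\fp$ be a prime and let $F$ be an acyclic complex of flat $R_\fp$-modules. Since $R_\fp$ is flat over $R$, each term of $F$ is flat over $R$, so $F$ is an acyclic complex of flat $R$-modules and hence F-totally acyclic by hypothesis. For any injective $R_\fp$-module $E$, the module $E$ is injective over $R$ as well---over a noetherian ring the indecomposable injective $R_\fp$-modules have the form $E(R/\mathfrak{q})$ with $\mathfrak{q} \subseteq \fp$, and arbitrary direct sums of injectives are injective---so $\tp{F}{E}$ is acyclic. As $F$ and $E$ are $R_\fp$-modules one has $\tp{F}{E} \is \tp[R_\fp]{F}{E}$, and it follows that $F$ is F-totally acyclic over $R_\fp$. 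Thus $R_\fp$ again satisfies condition (3), and it is enough to prove that a \emph{local} noetherian ring satisfying (3) is Gorenstein.

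So assume now that $R$ is local. Its Krull dimension $d$ is finite, whence $\splf \le d < \infty$ by Gruson--Raynaud \cite{LGrMRn71} and Jensen \cite{CUJ70}, and $R$ is coherent. Claims \ref{claim1}, \ref{claim3}, and \ref{claim4} then upgrade condition (3) to condition (4): every acyclic complex of projective $R$-modules is totally acyclic. The remaining and decisive step is to deduce that a local noetherian ring for which every acyclic complex of projectives is totally acyclic is Gorenstein.

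This last step is where I expect the real work, and the obstacle is twofold: a local ring need not admit a dualizing complex, so the Iyengar--Krause theorem \cite{SInHKr06} does not apply on the nose, while at the same time a non-Gorenstein ring typically carries so few acyclic complexes of projectives that building an explicit counterexample is not a viable route. The way around it that I would pursue is to pass to the completion $\widehat{R}$, which is a complete local ring and therefore does have a dualizing complex, and to use that $R$ is Gorenstein if and only if $\widehat{R}$ is. By \cite{SInHKr06}, $\widehat{R}$ is Gorenstein as soon as every acyclic complex of its projective---equivalently injective---modules is totally acyclic, so the task becomes transporting the total acyclicity hypothesis from $R$ to the complete ring. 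Here I would lean on the structure theory of flat cotorsion modules from \cite{xu}: a flat cotorsion $R$-module decomposes along $\operatorname{Spec} R$ into components that are completions of free $R_\fp$-modules, hence modules over the complete local rings $\widehat{R_\fp}$, and F-total acyclicity of flat complexes over $R$ should translate, component by component, into total acyclicity of complexes of projectives over these complete rings. Matching the two conditions through this decomposition---so that the Iyengar--Krause criterion can be invoked on each $\widehat{R_\fp}$---is the technical heart of the argument and the step I expect to be hardest to make precise; this is also the route of \cite[Theorem 4.27]{DMfSSl11}.
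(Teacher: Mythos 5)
Your ``if'' direction is correct and non-circular: Claims \ref{claim6} and \ref{claim2} are proved independently of Claim \ref{claim5}, so composing them is a legitimate, if less self-contained, substitute for the paper's direct argument (localize at $\fp$ and use that the injective $R_\fp$-module $E_\fp$ has finite flat dimension because $R_\fp$ is Gorenstein). Your localization step in the ``only if'' direction is also sound: an acyclic complex of flat $R_\fp$-modules restricts to an acyclic complex of flat $R$-modules, injective $R_\fp$-modules are injective over the noetherian ring $R$, and $\tp{F}{E} \is \tp[R_\fp]{F}{E}$ holds because $R \to R_\fp$ is a flat epimorphism of rings.

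The genuine gap is the last step, which you yourself flag as the part you cannot make precise: you never prove that a local noetherian ring satisfying (3), or (4), is Gorenstein. You only gesture at transporting the hypothesis to the completion via the structure theory of flat cotorsion modules --- the Murfet--Salarian route --- and that transport is exactly the content that would need to be supplied; as written it restates the difficulty rather than resolving it. Moreover, your intermediate upgrade from (3) to (4) over the local ring (which, incidentally, needs only Claims \ref{claim1} and \ref{claim3}, not Claim \ref{claim4} or finiteness of $\splf$) points away from the solution: condition (4) concerns projective $R$-modules, and projective $\widehat{R}$-modules are not projective over $R$, only flat, so (4) over $R$ says nothing about complexes of projective $\widehat{R}$-modules. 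The paper's proof shows that no decomposition theory is needed precisely because one keeps the hypothesis in its flat form: given an acyclic complex $P$ of projective $\widehat{R_\fp}$-modules, restriction of scalars along the flat map $R \to \widehat{R_\fp}$ exhibits $P$ as an acyclic complex of flat $R$-modules, so hypothesis (3) applies to $P$ itself; since every injective $\widehat{R_\fp}$-module $E$ is injective over $R$ (flatness again) and $\tp{P}{E} \is \tp[\widehat{R_\fp}]{P}{E}$, the complex $P$ is F-totally acyclic over $\widehat{R_\fp}$, hence totally acyclic by Claim \ref{claim3}, and \cite[Corollary 5.5]{SInHKr06} applies because the complete local ring $\widehat{R_\fp}$ has a dualizing complex; finally $R_\fp$ is Gorenstein since $\widehat{R_\fp}$ is. With this observation your outline closes; without it the decisive implication remains unproven.
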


\begin{proof}
  ``If'': Let $F$ be an acyclic complex of flat $R$-modules and $E$ be
  an injective $R$-module. The complex $\tp{F}{E}$ is acyclic if
  $(\tp{F}{E})_\fp \is \tp[R_\fp]{F_\fp}{E_\fp}$ is acyclic for every
  prime $\fp$. Fix $\fp$; the complex $F_\fp$ of flat $R_\fp$-modules
  is acyclic and, since $R_\fp$ is Gorenstein, the injective
  $R_\fp$-module $E_\fp$ has finite flat dimension; hence
  $\tp[R_\fp]{F_\fp}{E_\fp}$ is acyclic.

  ``Only if'': Fix a prime $\fp$. The local ring $R_\fp$ is Gorenstein
  if its $\fp R_\fp$-adic completion $\widehat{R_\fp}$ is Gorenstein.
  As an $R$-module $\widehat{R_\fp}$ is flat, so every injective
  $\widehat{R_\fp}$-module is injective over $R$.  Let $P$ be an
  acyclic complex of projective $\widehat{R_\fp}$-modules; it is a
  complex of flat $R$-modules and hence F-totally acyclic. Thus, for
  every injective $\widehat{R_\fp}$-module $E$ the complex $\tp{P}{E}
  \is \tp[\widehat{R_\fp}]{P}{E}$ is acyclic. That is, $P$ is
  F-totally acyclic and hence totally acyclic by
  Claim~\ref{claim3}. Being a complete local ring, $\widehat{R_\fp}$
  has a dualizing complex, and it follows from \cite[Corollary
  5.5]{SInHKr06} that it is Gorenstein.
\end{proof}

\begin{remark}
  \label{remark}
  We do not know if condition (\ref{p}) below is equivalent to
  finiteness of $\splf$; it is not even clear that it implies finite
  projective dimension of the flat $R$-module $\prod_\fp R_\fp$.
  \begin{equation}
    \label{p}
    \projdim{R_\fp} < \infty \ \ \text{for every prime ideal $\fp$ of $R$}\,.
  \end{equation}
  We show in Claim~\ref{claim7} that $(4) \Rightarrow (1)$ holds if
  $R$ is noetherian and satisfies (\ref{p}). Thus all four conditions
  in Theorem~\ref{thm:main} are equivalent for such rings.
\end{remark}

\begin{claim}
  \label{claim7}
  Let $R$ be noetherian. If $\projdim{R_\fp}$ is finite for every
  prime ideal $\fp$ of $R$, then every totally acyclic complex of
  projective $R$-modules is F-totally acyclic.
\end{claim}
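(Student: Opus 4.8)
The plan is to reduce to the local rings $R_\fp$, where finiteness of $\splf$ comes for free, and to feed the hypothesis on $\projdim{R_\fp}$ into a change-of-rings step. Let $P$ be a totally acyclic complex of projective $R$-modules and $E$ an injective $R$-module; I want $\tp{P}{E}$ to be acyclic. As in the ``if'' part of Claim~\ref{claim5}, it is enough to check this locally: $\tp{P}{E}$ is acyclic once $(\tp{P}{E})_\fp \is \tp[R_\fp]{P_\fp}{E_\fp}$ is acyclic for every prime $\fp$. Here $P_\fp$ is an acyclic complex of projective $R_\fp$-modules, and, since $R$ is noetherian, $E_\fp$ is an injective $R_\fp$-module. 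So I fix $\fp$ and aim to show that $\tp[R_\fp]{P_\fp}{E_\fp}$ is acyclic.

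The key step is to promote $P_\fp$ to a \emph{totally acyclic} complex over $R_\fp$. Let $Q$ be a projective $R_\fp$-module. Being a direct summand of a free module $(R_\fp)^{(Y)}$, it satisfies $\projdim{Q} \le \projdim{(R_\fp)^{(Y)}} = \projdim{R_\fp}$, which is finite by hypothesis. Since the terms of $P$ are projective, $\Hom{P}{-}$ carries short exact sequences of modules to short exact sequences of complexes, and $\Hom{P}{Q_0}$ is acyclic for every projective $R$-module $Q_0$; an induction on $\projdim{Q}$ along a finite $R$-projective resolution, via the long exact homology sequence, therefore shows that $\Hom{P}{Q}$ is acyclic. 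Adjunction along $R \to R_\fp$ gives $\Hom{P}{Q} \is \Hom[R_\fp]{P_\fp}{Q}$, so $\Hom[R_\fp]{P_\fp}{Q}$ is acyclic for every projective $R_\fp$-module $Q$; that is, $P_\fp$ is totally acyclic over $R_\fp$.

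Now $R_\fp$ is noetherian and local, hence of finite Krull dimension, so $\splf[R_\fp]$ is finite by Gruson--Raynaud \cite[Theorem~II.(3.2.6)]{LGrMRn71} and Jensen \cite[Proposition~6]{CUJ70}. Claim~\ref{claim4}, applied to the coherent ring $R_\fp$, then shows that the totally acyclic complex $P_\fp$ is F-totally acyclic over $R_\fp$. By definition this means $\tp[R_\fp]{P_\fp}{E_\fp}$ is acyclic, which is what was needed; ranging over all $\fp$ completes the proof.

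The main obstacle is the middle step, and it is exactly where hypothesis~(\ref{p}) enters. Over $R_\fp$ the finiteness of $\splf$ is automatic, so once $P_\fp$ is known to be totally acyclic the conclusion is immediate from the already-established Claim~\ref{claim4}; the real difficulty is that total acyclicity is given over $R$, and localization does not commute with $\Hom{P}{-}$ for complexes of arbitrary, non-finitely-generated projectives. The way around this is to test $P_\fp$ against projective $R_\fp$-modules $Q$, which, pulled back along $R \to R_\fp$, become $R$-modules of \emph{finite} projective dimension precisely because of~(\ref{p}); this is what lets the total acyclicity of $P$ descend to $P_\fp$. Without~(\ref{p}) the module $R_\fp$ could have infinite projective dimension over $R$ and the descent would break down.
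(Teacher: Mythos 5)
Your proof is correct, but it is packaged differently from the paper's. The paper never localizes $P$: it localizes only $E$, passes to the character complex $\Hom[\ZZ]{\tp{P}{E_\fp}}{\QZ} \is \Hom{P}{\Hom[\ZZ]{E_\fp}{\QZ}}$, notes that $\Hom[\ZZ]{E_\fp}{\QZ}$ is flat over $R_\fp$ and hence of finite projective dimension over $R_\fp$, and then---exactly as you do---uses the hypothesis (\ref{p}) to conclude that this single module has finite projective dimension over $R$, so that dimension shifting against the totally acyclic complex $P$ finishes the argument over $R$ itself. You instead localize $P$ as well, and your middle step---projective $R_\fp$-modules are summands of sums of copies of $R_\fp$, hence of finite projective dimension over $R$ by (\ref{p}); induction along a finite resolution plus adjunction then makes $P_\fp$ totally acyclic over $R_\fp$---isolates a clean reusable intermediate statement: under (\ref{p}), total acyclicity descends along $R \to R_\fp$. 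This lets you quote Claim~\ref{claim4} over the noetherian local (hence coherent, with $\splf[{R_\fp}]$ finite by Gruson--Raynaud and Jensen) ring $R_\fp$ as a black box, where the paper re-runs the character-module computation by hand. Both proofs thus rest on the same two ingredients---the character-dual/flatness trick and the dimension-shifting step powered by (\ref{p})---but your decomposition is more modular and makes the role of (\ref{p}) (descent of total acyclicity to localizations) explicit, at the cost of needing the auxiliary facts that $E_\fp$ is injective over $R_\fp$ and that local noetherian rings have finite $\splf$; the paper's version is shorter and never has to verify that $P_\fp$ is totally acyclic over $R_\fp$.
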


\begin{proof}
  Let $P$ be a totally acyclic complex of projective $R$-modules and
  let $E$ be an injective $R$-module. The complex $\tp{P}{E}$ is
  acyclic if $(\tp{P}{E})_\fp \is \tp{P}{E_\fp}$ is acyclic for every
  prime $\fp$. Fix $\fp$; it is sufficient to prove that the character
  complex
  \begin{equation*}
    \Hom[\ZZ]{\tp{P}{E_\fp}}{\QZ} \is \Hom{P}{\Hom[\ZZ]{E_\fp}{\QZ}}
  \end{equation*}
  is acyclic. The $R_\fp$-module $\Hom[\ZZ]{E_\fp}{\QZ}$ is flat, so
  it has finite projective dimension over $R_\fp$. Every projective
  $R_\fp$-module has finite projective dimension over $R$ by the
  assumption $\projdim{R_\fp} < \infty$. It follows that
  $\projdim{\Hom[\ZZ]{E_\fp}{\QZ}}$ is finite. By assumption
  $\Hom{P}{Q}$ is acyclic for every projective $R$-module $Q$, and it
  follows that $\Hom{P}{\Hom[\ZZ]{E}{\QZ}}$ is acyclic.
\end{proof}

\bibliographystyle{amsplain}


\providecommand{\MR}[1]{\mbox{\href{http://www.ams.org/mathscinet-getitem?mr=#1}{#1}}}
\renewcommand{\MR}[1]{\mbox{\href{http://www.ams.org/mathscinet-getitem?mr=#1}{#1}}}
\providecommand{\arxiv}[2][AC]{\mbox{\href{http://arxiv.org/abs/#2}{\sf
      arXiv:#2 [math.#1]}}} \def\cprime{$'$}
\providecommand{\bysame}{\leavevmode\hbox to3em{\hrulefill}\thinspace}
\providecommand{\MR}{\relax\ifhmode\unskip\space\fi MR }
\providecommand{\MRhref}[2]{%
  \href{http://www.ams.org/mathscinet-getitem?mr=#1}{#2} }
\providecommand{\href}[2]{#2}

\end{document}